\newcommand{\iso}{\cong}
\newcommand{\Hom}{\mbox{Hom}}
\newcommand{\git}{/\!\!/}
\newcommand{\CC}{\mathbb{C}}
\newcommand{\ZZ}{\mathbb{Z}}
\newcommand{\LL}{\mathbb{L}}
\DeclareMathOperator{\R}{R}
\DeclareMathOperator{\Tr}{Tr}
\DeclareMathOperator{\GL}{GL}
\DeclareMathOperator{\DT}{DT}
\DeclareMathOperator{\Hilb}{Hilb}
\newcommand{\vir}{\textrm{vir}}
\theoremstyle{plain} 
\newtheorem{example}[equation]{Example}
\newtheorem{lemma}{Lemma}[section]
\newtheorem{prop}{Proposition}[section]
\newtheorem{theorem}{Theorem}[section]
\newtheorem{D}{Definition}[section]
\title{Motivic Invariants of Quivers via Dimensional Reduction}
\author[Andrew Morrison]{Andrew Morrison}
\address{Dept. of Math., University of British Columbia, Vancouver, BC, Canada.}
\email{andrewmo@math.ubc.ca}
\date{March 2011}
\begin{document}

\maketitle

\begin{abstract}
We provide a reduction formula for the motivic Donaldson-Thomas 
invariants associated to a quiver with superpotential. The method 
is valid provided the superpotential has a linear factor, it allows 
us to compute virtual motives in terms of ordinary motivic classes of 
simpler quiver varieties. We outline an application, giving explicit 
formulas for the motivic Donaldson-Thomas invariants of the orbifolds $[\CC \times \CC^2/\ZZ_n]$.
\end{abstract}

\section{Introduction to Dimensional Reduction.}\label{intro}

In \cite{thom} Thomas defines integer valued invariants associated to the moduli spaces of sheaves on a Calabi-Yau threefold. 
These numbers contain geometric information about the underlying manifold. In particular they provide a virtual count for the 
number of curves in each homology class, conjecturally equivalent \cite{MNOP} to the invariants of Gromov-Witten. 

\paragraph{}Recently there have been several extensions of the integer valued Donaldson-Thomas invariant \cite{mot3CY}, \cite{COHA}, \cite{BBS}, \cite{JS}. 
In \cite{mot3CY} Kontsevich and Soibelman propose to work in a general three dimensional Calabi-Yau category. 
For a choice of stability condition they get moduli spaces of objects in this category each of which has a motivic DT invariant. 
The Euler numbers of these motives specialize to the classical invariant. 

\paragraph{} Quivers with superpotential provide concrete examples of such CY3 categories. 
Although quiver categories are of independent interest in representation theory \cite{rein} they often contain geometric content, 
in particular the derived category of many local CY3 folds can be realized in this way.

\paragraph{}We provide a reduction theorem for motivic DT invariants of a general class of quivers with superpotential. 
Our theorem expresses the motivic DT invariants in terms of the ordinary motivic classes of certain reduced quiver representations.

\paragraph{}In this way our work can be seen as a generalization of \cite{BBS} where the quiver with superpotential models the Hilbert scheme of $\CC^3$. 
Or from another point of view, in the cohomological Hall algebra, the reduction statement of \cite{COHA} section 4.8 
is the analog of our motivic result. 
\paragraph{}Our main application of this reduction theorem is to compute motivic DT invariants of orbifolds which specialize to the 
known classical invariants \cite{young}, a forthcoming paper \cite{a_n}.

\paragraph{}Let $Q$ be a finite quiver with vertex set $I$, that is a finite directed graph. The representations of $Q$ are indexed by a dimension vector 
$\mathbf{v} \in \mathbb{Z}_{\geq 0}^I$. At each vertex $i$ we have a vector space $V_i$ of dimension $v_i$, and for each arrow $\alpha : i \to j $ 
we have a linear map $M_{\alpha}: V_i \to V_j$. Two such representations are equivalent up to a change of basis vectors at each vertex. We produce a 
moduli space of quiver representations by taking a G.I.T quotient. The stability condition comes from the extra data of a framing vector 
$\mathbf{f}\in \ZZ_{\geq 0}^I \setminus \{0\}$, together with a choice of a linearization of the group action $\chi$;
\[ N^Q(\mathbf{v} , \mathbf{f}) = \prod_{\alpha : i \to j} \Hom(V_i,V_j) \times \prod_{i\in I} (V_i)^{f_i} \git _\chi \prod_{i\in I} \GL(V_i). \]
It is a smooth quasi-projective variety (see section 2).
\paragraph{}The path algebra $\CC Q$ of the quiver $Q$, is the vector space over $\CC$ with basis given by the paths in the quiver, the multiplication is then
 defined by concatenation of paths. An element $W\in \CC Q$ of the path algebra is called a superpotential if it is the sum of cycles, i.e. paths that
 form loops. We are specifically interested in superpotentials that have a linear factor;
\[ W = L \cdot R \] 
here $L$ is linear, that is a sum of length one paths.

\paragraph{}The superpotential defines a function on the moduli space of representations of $Q$. Given a representation $(M_\alpha)_{\alpha : i\to j}$ insert 
the matrix $M_\alpha$ in place of each occurrence of $\alpha$ in $W$ then take the trace;
\[ \Tr W : N^Q(\mathbf{v},\mathbf{f}) \to \CC \]
the definition is invariant under a change of basis for the representation and so gives a well defined regular function. We define the DT moduli space of 
$Q,W$ to be the scheme theoretic degeneracy locus of $\Tr W$;
\[\DT^{Q,W}(\mathbf{v},\mathbf{f}) = \{ d\Tr W = 0 \} \subset N^{Q,W}(\mathbf{v},\mathbf{f}). \]
Behrend, Bryan and Szendr\H{o}i \cite{BBS} define a virtual motive associated to each such a locus, essentially given by the motivic Milnor fibre of the map $\Tr W$. 
Using the same definition we have a motivic DT invariant;
\[ [\DT^{Q,W}(\mathbf{v},\mathbf{f})]_{\textrm{vir}} \in K_0(Var_\CC)[\LL^{-\frac{1}{2}}]\]
taking values in the Grothendieck ring of varieties, adjoined with a formal inverse square root of the Lefschetz motive $\LL = [\mathbb{A}^1_{\CC}]$. 
\paragraph{}When the superpotential has a linear factor $W=L\cdot R$, there exists a reduced space of quiver representations;
\[R^{Q,W}(\mathbf{v}) = \{ (M_\alpha)_{\alpha : i\to j} \mid R((M_\alpha)_{\alpha : i\to j}) = 0 \} \subset \prod_{\alpha : i\to j} \Hom(V_i,V_j). \]
Here the space is a variety and also we have no G.I.T quotient. Our result expresses the virtual motives of the DT moduli spaces in terms of 
the ordinary motives of the reduced spaces. It is best phrased in the context of a quantized Poisson algebra. 

\paragraph{}Let $\mathcal{M}_\CC^{St}$ be the Grothendieck ring of varieties where we formally invert the general linear groups and a square root of the 
Lefschetz motive. Now consider the formal power series over this ring, as a $\mathcal{M}_\CC^{St}$-module it is defined
\[ S_Q := \left\{ \sum_{ \mathbf{v} \in \mathbb{Z}^I_{\geq 0}} m_{\mathbf{v}} \mathbf{t}^{\mathbf{v}} \mid m_\mathbf{v} \in \mathcal{M}_\CC^{St} \right\} .\]
The Euler form $\langle , \rangle_Q$, a pairing associated to the adjacency matrix of $Q$, defines a non-commutative product $\ast$ on this set given by,
\[ \mathbf{t}^\mathbf{v} \ast \mathbf{t}^\mathbf{w} = \LL^{-\langle \mathbf{w},\mathbf{v} \rangle_Q} \mathbf{t}^{\mathbf{v} + \mathbf{w}}. \]
$S_Q$ is the quantized Poisson algebra of the quiver $Q$. The virtual motives of the DT moduli spaces give an element;
\[ \DT^{Q,W}_{\mathbf{f}}(\mathbf{t}) := \sum_{\mathbf{v}\in \ZZ^{I}_{\geq 0}} \LL^{-\frac{1}{2}\langle \mathbf{v}, \mathbf{v} \rangle_{Q}}
[\DT^{Q,W}(\mathbf{v},\mathbf{f})]_{\vir}\mathbf{t}^{\mathbf{v}} \in S_Q. \]
Similarly the reduced spaces can be assembled into a series;
\[\textrm{R}^{Q,W}(\mathbf{t}) := \sum_{\mathbf{v}\in \ZZ^{I}_{\geq 0}} \frac{[R^{Q,W}(\mathbf{v})]}{[\GL(\mathbf{v})]}\mathbf{t}^{\mathbf{v}} \in S_Q \]
here $\GL(\mathbf{v}) =\prod_{i\in I}\GL_{v_i} $. In this context our main result (Theorem 7.1) reads simply,
 
\[ \boxed{ \textrm{R}^{Q,W}(\LL^{\frac{\mathbf{f}}{2}}\mathbf{t}) = \DT^{Q,W}_\mathbf{f}(\mathbf{t})
\ast \textrm{R}^{Q,W}(\LL^{-\frac{\mathbf{f}}{2}}\mathbf{t}) }  \] 
where $(\LL^{\mathbf{a}}\mathbf{t})^\mathbf{v} = \LL^{\mathbf{a}\cdot \mathbf{v}}\mathbf{t}^{\mathbf{v}}$. This describes all the motivic DT invariants in terms of the ordinary motivic classes 
of the reduced spaces. In many cases computing the motives of the reduced spaces can be carried out explicitly, often leading to simple product formulas. 
In a future paper \cite{a_n} we will use this method to compute motivic DT invariants of Calabi-Yau orbifolds.

\section{Quiver with Superpotential.}\label{secqui}

Let $Q$ be a finite quiver with vertex set $I$ and arrows $\alpha : i \to j$. A path in the quiver is a sequence of arrows 
$\alpha_1 \cdot \alpha_2 \cdots \alpha_n$ so that the head of the arrow $\alpha_i$ coincides with the tail of the arrow 
$\alpha_{i+1}$, each path has a length given by the number of arrows of which it is composed.
\paragraph{}The path algebra of the quiver $\CC Q$, is the $\CC$ vector space with basis the set of all paths, and with multiplication given 
by concatenation of paths. An element $W \in \mathbb{C}Q$ 
is called a superpotential if all the monomials in $W$ are cycles. Then
\[ \tilde{W} \in \mathbb{C}Q / [\mathbb{C}Q,\mathbb{C}Q] \]
is the class of $W$ up to equivalence of cyclically permuting terms in any monomial. Ginzberg \cite{ginz},
provides a detailed account of such algebras and superpotentials. Our main interest is in their representations. 
For each dimension vector $\mathbf{v} \in \mathbb{Z}_{\geq 0}^I$ we define a linear space of representations of $Q$,
\[ M^{Q}(\mathbf{v}) = \prod_{\alpha : i \to j } \Hom(\mathbb{C}^{v_i},\mathbb{C}^{v_j}) \]
the group of automorphisms $\GL (\mathbf{v}) = \prod_{i\in I}\GL(v_i)$ acts on this space by change of basis.
The superpotential $W$ defines a function on the representations of $Q$. So that given a representation $(M_\alpha)_{\alpha : i \to j}$, 
we define $W((M_\alpha)_{\alpha:i \to j})$ to be the linear map obtained by inserting $M_\alpha$ in the place of each occurrence
 of $\alpha$ in $W$. Taking the trace of this linear map gives a complex number $\Tr W((M_\alpha)_{\alpha:i \to j})$
 well defined under cyclic reordering of each monomial and invariant under the action of $\GL(\mathbf{v})$. 
In particular this means that the class
 $\tilde{W}\in \mathbb{C}Q / [\mathbb{C}Q,\mathbb{C}Q]$ defines a $\GL (\mathbf{v})$ equivariant Chern-Simons functional,
\[ \Tr \tilde{W}_{\mathbf{v}} : M^Q(\mathbf{v}) \to \mathbb{C}. \]
The superpotentials we consider in this paper satisfy the following special condition.
\begin{D} 
A superpotential $\tilde{W} \in \mathbb{C}Q / [\mathbb{C}Q,\mathbb{C}Q]$ has a linear factor if for 
some lift $W\in \mathbb{C}Q$ there exists a factorization 
\[ W = L\cdot R \in \mathbb{C}Q \]
where $L$ is a linear combination of arrows, so that for each pair of vertices $i,j\in I$
 there is at most one such arrow $\alpha : i \to j$ between them. Moreover we have that no arrow in $L$ occurs in $R$. 
\end{D}

For such a superpotential we define
\begin{eqnarray*}
M_1^{Q,W}(\mathbf{v})&:=& \Tr\tilde{W}_{\mathbf{v}}^{-1}(1), \\
M_0^{Q,W}(\mathbf{v})&:=& \Tr\tilde{W}_{\mathbf{v}}^{-1}(0), \\
R^{Q,W}(\mathbf{v})  &:=& \{ (M_\alpha)_{\alpha:i \to j}\in M^Q(\mathbf{v})\mid R((M_\alpha)_{\alpha:i \to j}) = 0 \}
\end{eqnarray*}

the first two being fibres of the Chern-Simons functional and the latter the zero locus of the reduced equations $R = 0$. 
The corresponding moduli stacks are defined,
\begin{eqnarray*}
\mathfrak{M}_1^{Q,W}(\mathbf{v}) &:=& [M_1^{Q,W}(\mathbf{v}) / \GL (\mathbf{v}) ], \\
\mathfrak{M}_0^{Q,W}(\mathbf{v}) &:=& [M_0^{Q,W}(\mathbf{v}) / \GL (\mathbf{v}) ], \\
\mathfrak{R}^{Q,W}(\mathbf{v})   &:=& [R^{Q,W}(\mathbf{v})   / \GL (\mathbf{v}) ].
\end{eqnarray*}
As a final piece of notation we introduce two pairings on the lattice $\ZZ_{\geq 0}^I$ to be used later. 
Let $\mathbf{v},\mathbf{w}\in \ZZ^I$ then 
\begin{eqnarray*}
 && \mathbf{v} \cdot \mathbf{w}  = \sum_{i\in I} v_iw_i \\
 && \langle \mathbf{v},\mathbf{w} \rangle_Q = \sum_{i\in I} v_iw_i - \sum_{\alpha:i\to j} v_iw_j.
\end{eqnarray*}

\section{Stability for Quiver Representations.}\label{secstab}

We fix a dimension vector $\mathbf{v}\in \ZZ^I_{\geq 0}$. The stability condition depends upon a choice of framing vector
$\mathbf{f}\in \ZZ^I_{\geq 0} \setminus \{0\}$, introducing 
such will produce a fine moduli space of quiver representations. Let
\[ U^Q(\mathbf{v},\mathbf{f}) = \{ (M_\alpha, m_l) \mid \underline{\dim} \mathbb{C}\langle M_\alpha \rangle \{ m_l \} = \mathbf{v} \}
 \subset M^Q(\mathbf{v}) \times \prod_{i\in I}(\CC^{v_i})^{f_i} \]
where $\mathbb{C}\langle M_\alpha \rangle \{ m_l\}$, 
is defined to be the span of the collection of vectors $m_l$ under the successive action of the matrices $M_\alpha$. 
Now the group $\GL (\mathbf{v})$ acts freely on the $U^Q(\mathbf{v},\mathbf{f})$ giving a fine moduli space of quiver representations.

\begin{prop}{\cite[Szendr\H{o}i]{SzendroiNCDT}, \cite[King]{king}}\label{quivermodulispace}
There exists a character $\chi : \GL(\mathbf{v})\to \CC^*$ such that the open subset 
$U^Q(\mathbf{v},\mathbf{f})$ is precisely the subset of stable points for the action of $\GL(\mathbf{v})$ linearized by $\chi$.
In particular the action of $\GL(\mathbf{v})$ on $U^Q(\mathbf{v},\mathbf{f})$ is free, and the quotient
\[ N^Q(\mathbf{v},\mathbf{f}) := M^Q(\mathbf{v})\times \prod_{i\in I}(\CC^{v_i})^{f_i} \git _\chi \GL (\mathbf{v}) 
= U^Q(\mathbf{v},\mathbf{f})/\GL (\mathbf{v}) \]
is a smooth quasi-projective G.I.T. quotient.
\end{prop}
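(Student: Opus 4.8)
The plan is to realize $N^{Q}(\mathbf{v},\mathbf{f})$ as a moduli space of \emph{framed} quiver representations and to invoke King's numerical criterion, following Szendr\H{o}i. First I would pass to the framed quiver $\widetilde{Q}$ obtained from $Q$ by adjoining a new vertex $\infty$ together with $f_i$ arrows $\infty \to i$ for each $i \in I$. A representation of $\widetilde{Q}$ with dimension vector $\widetilde{\mathbf{v}} = (\mathbf{v},1)$ is exactly the datum of a representation $(M_\alpha)$ of $Q$ on $\prod_i \CC^{v_i}$ together with $f_i$ vectors $m_l \in \CC^{v_i}$ at each vertex, i.e.\ a point of $M^{Q}(\mathbf{v}) \times \prod_{i\in I}(\CC^{v_i})^{f_i}$. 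The group $\GL(\widetilde{\mathbf{v}}) = \GL(\mathbf{v}) \times \CC^\ast$ acts, the central $\CC^\ast$ embedded diagonally acts trivially, and the effective quotient group is canonically $\GL(\mathbf{v})$ acting in the given way.

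Next I would fix the stability parameter. Take $\theta \in \ZZ^{I \cup \{\infty\}}$ with $\theta_i > 0$ for all $i \in I$ and $\theta_\infty = -\sum_{i\in I}\theta_i v_i$, so that $\theta \cdot \widetilde{\mathbf{v}} = 0$ as King's convention requires; this $\theta$ corresponds to the character $\chi : \GL(\mathbf{v}) \to \CC^\ast$, $\chi((g_i)_i) = \prod_{i\in I}\det(g_i)^{\theta_i}$, together with the induced linearization of the trivial bundle on $M^{Q}(\mathbf{v}) \times \prod_{i\in I}(\CC^{v_i})^{f_i}$. Then I would run King's criterion: a point is $\theta$-semistable iff every subrepresentation $S$ of the associated $\widetilde{Q}$-representation satisfies $\theta(\underline{\dim}\,S) \ge 0$, and $\theta$-stable iff the inequality is strict for every nonzero proper $S$. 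Subrepresentations come in two types. Those with $S_\infty = 0$ are ordinary subrepresentations $V' \subseteq V$ and satisfy $\theta(\underline{\dim}\,S) = \sum_i \theta_i \dim V'_i > 0$. Those with $S_\infty = \CC$ correspond to $M_\alpha$-invariant subspaces $V'' \subseteq V$ containing all the $m_l$, and satisfy $\theta(\underline{\dim}\,S) = \theta_\infty + \sum_i \theta_i \dim V''_i = \sum_i \theta_i(\dim V''_i - v_i) \le 0$, with equality iff $V'' = V$. Hence the point is $\theta$-semistable iff the only invariant subspace containing the framing vectors is all of $V$ --- that is, iff $\underline{\dim}\,\CC\langle M_\alpha\rangle\{m_l\} = \mathbf{v}$, the defining condition of $U^{Q}(\mathbf{v},\mathbf{f})$ --- and in that case no nonzero proper subrepresentation has $\theta$-slope $0$, so $\theta$-semistable and $\theta$-stable coincide.

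It remains to assemble the conclusions. The GIT quotient $M^{Q}(\mathbf{v}) \times \prod_{i\in I}(\CC^{v_i})^{f_i} \git_\chi \GL(\mathbf{v})$ is projective over $\Spec$ of the finitely generated $\CC$-algebra of $\GL(\mathbf{v})$-invariants, hence quasi-projective; and since semistable $=$ stable it is a geometric quotient of the open locus $U^{Q}(\mathbf{v},\mathbf{f})$ of stable points, so it equals $U^{Q}(\mathbf{v},\mathbf{f})/\GL(\mathbf{v})$. For freeness I would argue directly: if $g \in \GL(\mathbf{v})$ fixes a point $(M_\alpha, m_l) \in U^{Q}(\mathbf{v},\mathbf{f})$, then $g$ fixes every $m_l$ and commutes with every $M_\alpha$, hence fixes the subspace they generate, which is all of $\prod_i \CC^{v_i}$, so $g = \mathrm{id}$. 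Finally, the ambient space is an affine space, hence smooth, and a geometric quotient of a smooth variety by a free action of a smooth algebraic group is smooth (the quotient map is faithfully flat with smooth fibres), so $N^{Q}(\mathbf{v},\mathbf{f})$ is smooth and quasi-projective.

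The step I expect to require the most care is the stability computation: one must fix the sign conventions in the Hilbert--Mumford/King criterion consistently with the chosen linearization, and check that the dichotomy of subrepresentations genuinely forces semistability to be equivalent both to the generation condition and to stability (this uses $\theta_i > 0$ strictly, and is where $\mathbf{f} \neq 0$ enters). The remaining inputs --- quasi-projectivity of GIT quotients by reductive groups and smoothness of free quotients of smooth varieties --- are standard, and the identification of framed $Q$-representations with $\widetilde{Q}$-representations is purely formal.
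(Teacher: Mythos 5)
Your proof is correct and is exactly the argument the paper delegates to its citations: it reproduces Szendr\H{o}i's translation of the framed moduli problem into King's $\theta$-(semi)stability for the extended quiver $\widetilde{Q}$, together with the standard verification that semistability equals stability for this $\theta$ (which forces freeness and smoothness of the geometric quotient). The one point you correctly flag --- the sign convention in King's criterion --- is handled consistently: your choice $\theta_i>0$ with the ``$\geq 0$'' convention is the mirror image of King's original ``$\leq 0$'' normalization and yields the same conclusion, so no gap remains.
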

The Chern-Simons functional is $\GL(\mathbf{v})$ equivariant and consequently descends to a regular function on the smooth quotient,
\[ \Tr \tilde{W}_{\mathbf{v}} : N^Q(\mathbf{v},\mathbf{f}) \to \CC.  \]
Our main object of interest is the scheme theoretic degeneracy locus of this functional, which we define;
\[ \DT^{Q,W}(\mathbf{v},\mathbf{f}) := \{ d \Tr \tilde{W}_{\mathbf{v}} = 0 \} \subset  N^Q(\mathbf{v},\mathbf{f}) . \]

\begin{example}\label{HilbnC3example}
Let $Q$ be the quiver with one vertex $\{\star\}$ and three arrows $\{x,y,z\}$, superpotential $W = x(yz-zy)$ 
then it is well known \cite[Prop 2.1]{BBS},
\[\DT^{Q,W}(n,1) = \Hilb^n(\CC^3). \]
\end{example}

\section{Grothendieck Rings.}

Here is an account of Grothendieck rings following Bridgeland \cite{bridge} (c.f. To\"en \cite{toen}). 
We are interested in spaces that are varieties or stacks over $\CC$. Denote by,
\[ Var_{\mathbb{C}} \subset St_{\mathbb{C}} \]
the inclusion of the category of varieties in the category of Artin stacks. Our first definition is the Grothendieck ring of varieties.
\begin{D}The Grothendieck ring of varieties $K_0(Var_{\mathbb{C}})$, is the free abelian group on isomorphism classes of varieties over $\mathbb{C}$, modulo the scissor relations 
\[ [X] = [Z] + [X \setminus Z] \]
for $Z$ a subvariety of $X$. Multiplication in $K_0(Var_{\mathbb{C}})$ is given by fibre product
\[[X]\cdot[Y] =[X \times_{\mathbb{C}} Y].\]
\end{D} 
For a variety $X$ we will call $[X]$ the motivic class of $X$. For example consider a Zariski fibration $\pi: E\to B$ with fibre $F$, then stratifying the base by trivializing neighborhoods and using the scissor relations we get,
\[ [E] = [F]\cdot [B] \in K_{0}(Var_{\mathbb{C}}). \]
We can use this fact to give a nice formula for the motivic class of the general linear group.
\begin{lemma}
\[ [\GL_{n}] = \prod_{k=0}^{n-1} (\mathbb{L}^n - \mathbb{L}^{k}) \in K_{0}(Var_{\mathbb{C}}). \]
Where $\mathbb{L} = [\mathbb{A}^1_{\mathbb{C}}]$.
\end{lemma}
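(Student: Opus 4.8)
The plan is to construct an invertible matrix one column at a time, exhibiting $\GL_n$ as the top of a short tower of Zariski-locally trivial fibrations and then applying repeatedly the identity $[E]=[F]\cdot[B]$ recorded above. For $1\le k\le n$ let $X_k\subset\Hom(\CC^k,\CC^n)$ be the open subvariety of $n\times k$ matrices of full column rank $k$, i.e.\ those for which some $k\times k$ minor is non-zero. Then $X_1=\CC^n\setminus\{0\}$, so $[X_1]=\LL^n-1=\LL^n-\LL^0$, while $X_n=\GL_n$; hence it suffices to relate $[X_{k+1}]$ to $[X_k]$.

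Next I would introduce the morphism $\pi_k\colon X_{k+1}\too X_k$ that forgets the last column. It is well defined because a matrix of rank $k+1$ has its first $k$ columns linearly independent, so they form a point of $X_k$. Over $A\in X_k$ with column span $V\subset\CC^n$ (a $k$-plane) the fibre of $\pi_k$ is $\{v\in\CC^n\mid v\notin V\}=\CC^n\setminus V$, whose motivic class is $\LL^n-\LL^k$ by the scissor relations. The essential point is that $\pi_k$ is Zariski-locally trivial. Cover $X_k$ by the open sets $U_S$, indexed by $k$-element subsets $S\subset\{1,\dots,n\}$, on which the submatrix $A_S$ of rows in $S$ is invertible; these cover $X_k$ by the minor description. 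On $U_S$ the span $V$ is the graph of the linear map $\phi_A=A_{\bar S}A_S^{-1}\colon\CC^S\to\CC^{\bar S}$, so $\CC^n=V\oplus\CC^{\bar S}$ and the assignment $(A,v)\mapsto\bigl(A,\,v_S,\,v_{\bar S}-\phi_A(v_S)\bigr)$ gives an isomorphism $\pi_k^{-1}(U_S)\iso U_S\times\CC^k\times(\CC^{n-k}\setminus\{0\})$; by Cramer's rule the map $\phi_A$ depends algebraically on $A\in U_S$.

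Refining $\{U_S\}$ into a finite disjoint stratification of $X_k$ by locally closed subvarieties, on each stratum $\pi_k$ restricts to a trivial fibration with fibre class $\LL^n-\LL^k$, so summing over strata and using additivity in $K_0(Var_\CC)$ gives $[X_{k+1}]=(\LL^n-\LL^k)\,[X_k]$. Induction on $k$ from $[X_1]=\LL^n-\LL^0$ then yields $[\GL_n]=[X_n]=\prod_{k=0}^{n-1}(\LL^n-\LL^k)$. The only genuine obstacle is the Zariski-local triviality of $\pi_k$, namely checking that the splitting $\CC^n=V\oplus\CC^{\bar S}$ and the induced trivialisation are algebraic in the matrix entries; this is the routine Cramer's-rule computation indicated above, and everything else is formal manipulation with the scissor relations.
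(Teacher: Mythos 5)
Your proof is correct, and it takes a different (though closely related) route from the paper's. The paper fibres $\GL_n$ over $\CC^n\setminus\{0\}$ by the first column, identifies the fibre with $\GL_{n-1}\times\CC^{n-1}$, and inducts on $n$; the resulting recursion $[\GL_n]=(\LL^n-1)\,\LL^{n-1}\,[\GL_{n-1}]$ yields the product formula after a small algebraic rearrangement. You instead keep $n$ fixed and build the matrix up through the tower $X_1\to\cdots\to X_n$ of full-column-rank matrix spaces, where each step $X_{k+1}\to X_k$ has fibre $\CC^n\setminus V$ of class exactly $\LL^n-\LL^k$, so the factors of the product appear one at a time with no rearrangement needed. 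Your version also buys something the paper elides: an explicit verification of Zariski-local triviality via the minor cover $\{U_S\}$ and the Cramer's-rule splitting $\CC^n=V\oplus\CC^{\bar S}$, together with the correct way to convert a locally trivial fibration into the identity $[E]=[F]\cdot[B]$ by passing to a locally closed stratification. (The paper simply asserts its map is a Zariski fibration with the stated fibre, which likewise requires a choice of complement varying algebraically over the base.) The two arguments are of comparable length and both are standard; yours is marginally more self-contained, the paper's marginally slicker.
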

\begin{proof}
The map
\[ \pi: \GL_{n} \to \mathbb{C}^n \setminus \{ 0 \}, \]
sending a matrix to its first column, is a Zariski fibration with fibre equal to 
\[ \GL_{n-1} \times \mathbb{C}^{n-1} \]
so by what was said above
\[ [\GL_{n}] = (\mathbb{L}^n - 1) \mathbb{L}^{n-1} [\GL_{n-1}], \]
and the result follows by induction.
\end{proof}
Recall that the Grassmannian can be defined as the free quotient of the general linear group by the subgroup of matrices fixing a hyperplane of dimension $k$, so as an immediate corollary of the above lemma we also have a formula for the motivic class of the Grassmannian, 
\[ [Gr(k,n)] = \frac{[\GL_n]}{[\GL_k][\GL_{n-k}][\Hom(k,n-k)]} \in K_0(Var_{\CC}). \]
To define the Grothendieck ring of stacks we need the notion of geometric bijection, we say that a representable morphism
\[f: X\to Y \]
is a geometric bijection if it induces an isomorphism
\[ f: X(\mathbb{C}) \to Y(\mathbb{C}) \]
between the groupoids of $\mathbb{C}$-valued points. In our definition we shall also need to consider stacks with algebraic stabilizers, that is stacks $X$ locally of finite type over $\CC$ such that for all $x\in X(\CC)$, $\textrm{Isom}_\CC(x,x)$ is an affine algebraic group.
\begin{D} The Grothendieck ring of stacks $K_0(St_{\mathbb{C}})$, is the free abelian group of isomorphism classes of stacks of over $\mathbb{C}$, with algebraic stabilizers, modulo the relations 
\begin{enumerate}
\item $[X\sqcup Y] = [X] + [Y]$.
\item $[X] = [Y]$ for every geometric bijection $f: X\to Y$.
\item $[X] = [Y]$ for every pair of Zariski fibrations with the same base and fibre. 
\end{enumerate}
Again multiplication comes from the fibre product.
\end{D}
The inclusion of the category of varieties in the category of stacks gives an obvious homomorphism of rings 
\[ K_0(Var_{\mathbb{C}}) \xrightarrow{\phi} K_0(St_{\mathbb{C}}) , \]
and To\"en \cite[Theorem 3.10]{toen} proves that its extension 
\[ K_0(Var_{\mathbb{C}})[[\GL_n]^{-1} ; n\geq 1] \xrightarrow{\tilde{\phi}} K_0(St_{\CC}) \]
is an isomorphism. This fact is proven by reducing the class of every stack with algebraic stabilizers to a global quotient $[Y/ \GL_n]$, with $Y$ a variety. The result then follows from this lemma.

\begin{lemma}\label{princebundle}
Every principal $\GL_n$ bundle $\pi : Y \to X$ is a Zariski fibration. In particular 
\[ [X] = [Y] /[\GL_n] \in K_0(St_{\mathbb{C}}) \]
\end{lemma}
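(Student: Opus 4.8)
The plan is to split the statement into two parts: the geometric assertion that $\pi$ is Zariski-locally trivial, and the deduction of the motivic identity from the defining relations of $K_0(St_{\CC})$.

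For the geometric part I would invoke the classical equivalence between principal $\GL_n$-bundles and rank $n$ vector bundles. Given $\pi : Y \to X$, form the associated vector bundle $E := Y \times_{\GL_n} \mathbb{A}^n$; conversely $Y$ is recovered as the frame bundle $\underline{\mathrm{Isom}}_X(\OO_X^{\oplus n}, E)$, so the two carry the same data. A rank $n$ vector bundle is trivial over the members $U_i$ of a suitable Zariski-open cover of $X$ --- this is precisely the statement that an (a priori only \'etale- or fppf-) locally free coherent sheaf of finite rank is in fact Zariski-locally free, equivalently that $\GL_n$ is a \emph{special} group in the sense of Serre, which may also be proven directly via Hilbert's Theorem 90. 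Over such a $U_i$ one has $\pi^{-1}(U_i) = \underline{\mathrm{Isom}}_{U_i}(\OO^{\oplus n}, \OO^{\oplus n}) \cong U_i \times \GL_n$ compatibly with the $\GL_n$-action, so $\pi$ is Zariski-locally a trivial product, i.e. a Zariski fibration with fibre $\GL_n$.

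For the motivic identity I would compare $\pi : Y \to X$ with the trivial fibration $X \times_{\CC} \GL_n \to X$: both are Zariski fibrations over $X$ with fibre $\GL_n$, so relation (3) in the definition of $K_0(St_{\CC})$ gives $[Y] = [X \times_{\CC} \GL_n]$, and since multiplication in the Grothendieck ring is the fibre product this equals $[X]\cdot[\GL_n]$. The class $[\GL_n]$ is invertible in $K_0(St_{\CC})$ by the isomorphism $K_0(Var_{\CC})[[\GL_n]^{-1};\, n\geq 1] \xrightarrow{\sim} K_0(St_{\CC})$ recalled above, so dividing yields $[X] = [Y]/[\GL_n]$. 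The only genuine content lies in the Zariski-local triviality of $\GL_n$-bundles; the remainder is bookkeeping with the relations, and the one point requiring care is conventional --- one must be using ``principal $\GL_n$-bundle'' in the weaker \'etale/fppf-local sense in order for the special-group property to actually be doing work.
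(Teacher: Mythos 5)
Your proof is correct and follows essentially the same route as the paper's: both hinge on the fact that $\GL_n$ is special, so that the a priori only \'etale-local triviality of a principal $\GL_n$-bundle upgrades to Zariski-local triviality, after which the motivic identity is immediate from relation (3) and the invertibility of $[\GL_n]$. The only difference is that you additionally sketch \emph{why} $\GL_n$ is special (via the frame-bundle/vector-bundle correspondence and Zariski-local freeness of locally free sheaves) and write out the motivic bookkeeping, whereas the paper simply cites Serre and leaves that bookkeeping implicit.
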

\begin{proof}
The fibration $\pi: Y\to X$ is a principal $\GL_n$ bundle if its pullback to any scheme $S$ is,
\[ \xymatrix{ & Y \ar[d]^\pi \\ S\ar[r]^f  & X.} \]
So $f^*\pi : f^* Y \to S$ is a principal $\GL_n$ bundle, therefore locally trivial in the \'etale topology. But as the group $\GL_n$ is special \cite{serre} this bundle is a Zariski fibration.
\end{proof}

Since we need them for our definition of virtual motives we hereby define two extended rings of motivic classes,
\[ \mathcal{M}_{\mathbb{C}} := K_0(Var_{\mathbb{C}})[\mathbb{L}^{-\frac{1}{2}}]
 \hspace{0.5cm} \textrm{   and   } \hspace{0.5cm} 
\mathcal{M}^{St}_{\mathbb{C}} := K_0(St_{\mathbb{C}})[\mathbb{L}^{-\frac{1}{2}}]. \]
The topological Euler characteristic of classes in $K_0(Var_{\mathbb{C}})$ extends to $\mathcal{M}_{\mathbb{C}}$ by letting $\chi(\mathbb{L}^{-\frac{1}{2}}) = -1$, giving a ring homomorphism,
\[ \chi : \mathcal{M}_{\mathbb{C}}\to \ZZ. \]

\section{Virtual Motives.}

Let $f:M\to\CC$ be a regular function on the a smooth quasi-projective variety. Denef and Loeser \cite{denefloeser} study the vanishing cycles
 of such a map and define a motivic class of vanishing cycles $[\varphi_f]$, which is cohomologically equivalent to the Milnor fibre 
of $f$. This class can be used directly to give a definition of the virtual motive in the following case.
\begin{D}{\cite{BBS}} Let $f:M\to\CC$ be a regular function on the a smooth quasi-projective variety, with
 $Z= \{ df= 0\} \subset M$ the scheme theoretic degeneracy locus of $f$. The virtual motive of $Z$ is defined,
\[ [Z]_{\textrm{vir}} = -\mathbb{L}^{-\frac{\dim M}{2}} [\varphi_f] \in \mathcal{M}_{\CC} .\]
\end{D}
This really depends not only on the scheme $Z$ but on the function $f$ and space $M$. However taking the Euler characteristic 
will give a number intrinsic to $Z$. In particular if $Z$ is a moduli space of sheaves on a Calabi-Yau threefold then 
$\chi([Z]_{\textrm{vir}})$ will equal the Donaldson-Thomas invariant, by Behrend's description of the constructible function 
$\nu$ in terms of the Milnor fibre \cite{behrend}.
\paragraph{}
Under some simplifying assumptions there exists an explicit description of the virtual motive.
\begin{D}{(Property A.)}\label{A} Let $f:M\to \CC$ be a regular function on a smooth quasi-projective variety $M$. Then $f$ satisfies Property A if there exist an algebraic torus $T$ acting on $M$, so that for all $m\in M$ and $t\in T$ we have 
\[ f (t \cdot m) = \chi (t) \cdot f (m), \]
with $\chi :T \to \CC^*$ a primitive character.
\end{D}
Property A means that the map is a trivial fibration over $\CC^*$. This is easy to see since the primitivity of the character $\chi$ means there exists a one dimensional subtorus $\CC^* \subset T$ such that $\chi$ is an isomorphism when restricted to $\CC^*$.  Letting 
$M_1 = f^{-1}(1)$ and $M_0 = f^{-1}(0)$ the trivialization is
\[ \xymatrix{  \CC^*\times M_1  \ar[r]^{t\cdot m} & M \setminus  M_0 } \]  
sending a point $m$ in the fibre over $1$ to the point $t\cdot m \in  M \setminus  M_0  $.

\begin{D}{(Property B.)}\label{B} Let $f:M\to \CC$ be a regular function on a smooth quasi-projective variety $M$ with $T$ action satisfying Property A. Then $f$ satisfies Property B if there exist a one dimensional subtorus $\CC^*\subset T$, with compact fixed point set $M^{\CC^*}$ and for all $m\in M$, $\lim_{\lambda\to 0}\lambda \cdot m$ exists. 
\end{D}
A torus action with Property B is said to be circle compact. For such actions the virtual motive has a simple description.
\begin{prop}{  \cite[Behrend, Bryan, Szendr\H{o}i]{BBS} }\label{fibdiff}
Let $f:M\to \CC$ be a regular function on a smooth quasi-projective variety $M$ with $T$ action satisfying Property B. 
The motivic class of vanishing cycles $[\varphi_f]$ can be expressed as the motivic difference of the general and central fibres 
\[ [\varphi_f ] = [f^{-1}(1)] - [f^{-1}(0)] \in K_0(Var_{\CC}) \] 
in particular 
\[[Z]_{\textrm{vir} } = \LL^{-\frac{\dim M}{2}}( [f^{-1}(0)] - [f^{-1}(1)]  )\in \mathcal{M}_{\CC}.\]
\end{prop}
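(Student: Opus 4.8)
The plan is to combine the motivic vanishing cycle computation of Denef--Loeser with the torus trivialization supplied by Property~A, upgraded by the compactness hypothesis of Property~B. First I would recall from \cite{denefloeser} that for a regular function $f:M\to\CC$ on a smooth variety the motivic nearby fibre $\psi_f$ and the motivic vanishing cycle $\varphi_f$ are related by $[\varphi_f] = [\psi_f] - [f^{-1}(0)]$ in $K_0(Var_{\CC})$ (up to the usual conventions on the $\hat\mu$-action, which Property~A will render trivial). So the whole statement reduces to the identity $[\psi_f] = [f^{-1}(1)]$. The key point is that Property~A gives a primitive character $\chi:T\to\CC^*$ and hence a one-dimensional subtorus $\CC^*\subset T$ on which $\chi$ restricts to an isomorphism, producing a $\CC^*$-equivariant trivialization $\CC^*\times M_1 \xrightarrow{\sim} M\setminus M_0$ where the $\CC^*$ acts by scaling on the first factor. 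In other words $f$ is, away from its zero fibre, a Zariski-locally (in fact globally) trivial fibration with fibre $M_1$ and an equivariant structure, so the arcs contributing to the motivic nearby cycle all come from $M_1$.

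The second step is to make the passage $[\psi_f]=[M_1]$ precise. The cleanest route is to invoke the fact, due essentially to Denef--Loeser and used in this exact form in \cite{BBS}, that if $f$ is $\CC^*$-equivariant for a linear (weight-one) scaling action with the property that $\lim_{\lambda\to 0}\lambda\cdot m$ exists for all $m$, then the motivic nearby fibre equals the class of the generic fibre $[f^{-1}(1)]$, with trivial monodromy action. Here Property~B is exactly what guarantees that the limit exists; moreover the compactness of $M^{\CC^*}$ (or at least properness of the relevant limit map) is what allows one to avoid contributions ``at infinity'' and to identify the motivic nearby cycle integral over the arc space with the naive fibre class. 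Concretely, one stratifies $M$ by the Bia\l{}ynicki-Birula-type cells associated to the $\CC^*$-action attracting to the fixed locus, notes that $f$ is homogeneous of weight one on each, and computes the motivic zeta function $Z_f(T)$ stratum by stratum; the homogeneity forces $Z_f(T)$ to be a finite geometric-type expression whose ``limit'' $-\psi_f$ is simply $-[M_1]$.

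Finally, assembling the pieces: from $[\varphi_f] = [\psi_f] - [f^{-1}(0)]$ and $[\psi_f] = [f^{-1}(1)]$ we get $[\varphi_f] = [f^{-1}(1)] - [f^{-1}(0)]$ in $K_0(Var_{\CC})$, and then the definition $[Z]_{\vir} = -\LL^{-\frac{\dim M}{2}}[\varphi_f]$ immediately yields $[Z]_{\vir} = \LL^{-\frac{\dim M}{2}}\bigl([f^{-1}(0)] - [f^{-1}(1)]\bigr)$ in $\MM_\CC$, as claimed. The main obstacle I anticipate is the rigorous identification $[\psi_f] = [M_1]$: the motivic nearby cycle is defined via a limit of a motivic zeta function built from jet spaces and arc-theoretic data, and one must genuinely use equivariance plus Property~B (not merely Property~A) to control the higher jet contributions and the behaviour near $M_0$ and near infinity. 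Everything else --- the reduction via $[\varphi_f]=[\psi_f]-[f^{-1}(0)]$ and the final bookkeeping with the Lefschetz normalization --- is essentially formal. Since this proposition is attributed to Behrend--Bryan--Szendr\H{o}i, I would in practice cite \cite{BBS} for the delicate step and only sketch the equivariant trivialization coming from Properties~A and~B.
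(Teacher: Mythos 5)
Your proposal is correct and takes essentially the same route as the paper: the paper offers no independent proof of this proposition, attributing it entirely to \cite{BBS}, and your outline --- reduce to the identity $[\psi_f]=[f^{-1}(1)]$ for the motivic nearby fibre via the weight-one $\CC^*$-equivariance and circle-compactness supplied by Properties A and B, then apply $[Z]_{\vir}=-\LL^{-\frac{\dim M}{2}}[\varphi_f]$ --- is precisely the argument of \cite{BBS}. The final sign bookkeeping is also correct, so there is nothing to add.
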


\section{Quantized Poisson Algebra.}\label{secpos}

Our results are best couched in the context of a quantized Poisson algebra. Kontsevich and Soibelman 
see this arising naturally in their work on motivic DT invariants of a three dimensional Calabi-Yau categories \cite{mot3CY} 
and again in the quiver context \cite{COHA}.
\begin{D} Let $Q$ be a finite quiver with pairing $\langle , \rangle_Q$. The algebra $S_Q$, is an $\mathcal{M}^{St}_\CC$-module
\[ S_Q := \prod_{\mathbf{v} \in \ZZ_{\geq 0}^I} \mathcal{M}^{St}_\CC t^{\mathbf{v}},\]
with a non-commutative product,
\[ t^{\mathbf{v}}\ast t^{\mathbf{w}} = \LL^{-\langle\mathbf{w} ,\mathbf{v} \rangle_Q}t^{\mathbf{v}+ \mathbf{w}}.\]
\end{D}
Roughly speaking this is a non-commutative ring of power series with motivic classes as coefficients. 
All the motivic classes we are interested in can be neatly packaged in two such series. The DT series,
\[\DT^{Q,W}_{\mathbf{f}}(\mathbf{t}) := \sum_{\mathbf{w}\in \ZZ^{I}_{\geq 0}} \LL^{-\frac{1}{2}\langle \mathbf{w}, \mathbf{w} \rangle_Q}
[\DT^{Q,W}(\mathbf{w},\mathbf{f})]_{\vir}\mathbf{t}^{\mathbf{w}} \in S_Q,  \]
and the reduced series,
\[\textrm{R}^{Q,W}(\mathbf{t}) := \sum_{\mathbf{v}\in \ZZ^{I}_{\geq 0}} \frac{[R^{Q,W}(\mathbf{v})]}{[\GL(\mathbf{v})]}
\mathbf{t}^{\mathbf{v}} \in S_Q.\]
In fact the coefficents of the DT series are classes in $\mathcal{M}_\CC \subset \mathcal{M}^{St}_\CC $.

\section{Results.}\label{secthms}

Here we extend the results of \cite[\S 2.7]{BBS} to a general quiver and superpotential with a linear factor. 
\paragraph{}First we work without stability and consider the Chern-Simons functional
\[ \textrm{Tr}\tilde{W}_\mathbf{v} : M^Q(\mathbf{v}) \to \CC .\]
One immediate consequence of the linearity of the superpotential is that the above map satisfies Property A, hence is a trivial fibration over $\CC^*$. To see this just consider $\CC^*$ acting diagonally on the left of the matrices appearing in the linear factor $L$, one of which will be non-zero if the trace map is. Recall the definitions of Section \ref{secqui}
\begin{eqnarray*}
M_1^{Q,W}(\mathbf{v})&:=& \Tr\tilde{W}_{\mathbf{v}}^{-1}(1), \\
M_0^{Q,W}(\mathbf{v})&:=& \Tr\tilde{W}_{\mathbf{v}}^{-1}(0), \\
R^{Q,W}(\mathbf{v})  &:=& \{ (M_\alpha)\in M(\mathbf{v})\mid R(M_{\alpha}) = 0 \},
\end{eqnarray*}
the first space here is the general fibre, the second the central fibre and the space $R^{Q,W}(\mathbf{v})$ we call the reduced space, since it is the locus of the reduced equation $R=0$. The corresponding stacks defined in Section \ref{secqui} were denoted $\mathfrak{M}_1^{Q,W}(\mathbf{v}),\mathfrak{M}_0^{Q,W}(\mathbf{v})$ and $\mathfrak{R}^{Q,W}(\mathbf{v}) $. The following proposition expresses the difference of the general and central fibres in terms of the reduced space.

\begin{prop}\label{dr}
Let $Q$ be a finite quiver with superpotential $W$. Suppose $W$ has a linear factor. For any dimension vector $\mathbf{v}\in \mathbb{Z}_{\geq 0}^I$ 
let $\mathfrak{M}_{1}^{Q,W}(\mathbf{v})$, $\mathfrak{M}_{1}^{Q,W}(\mathbf{v})$, $\mathfrak{R}^{Q,W}(\mathbf{v})$ be the stacks defined above. 
Then
\[ [\mathfrak{M}_{0}^{Q,W}(\mathbf{v})] - [\mathfrak{M}_{1}^{Q,W}(\mathbf{v})] = [\mathfrak{R}^{Q,W}(\mathbf{v})] \]
in the Grothendieck ring of stacks $K_0(St)$.
\end{prop}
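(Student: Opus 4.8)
The plan is to prove the corresponding identity for the \emph{linear spaces} in $K_0(Var_\CC)$ and then divide by $[\GL(\mathbf v)]$ to descend to stacks. Split the arrows of $Q$ into the arrows $\alpha_1,\dots,\alpha_r$ appearing in $L$ (pairwise distinct, at most one joining any ordered pair of vertices, and none of them occurring in $R$) together with the remaining arrows; this gives a product decomposition $M^Q(\mathbf v)=A\times B$, where $A=\prod_k\Hom(\CC^{v_{i_k}},\CC^{v_{j_k}})$ holds the matrices of the $\alpha_k\colon i_k\to j_k$ and $B$ holds all the others. Since $W=L\cdot R$ with $L$ a sum of single arrows none of which recurs in $R$, every monomial of $W$ contains exactly one $\alpha_k$, so for fixed $b\in B$ the map $a\mapsto\Tr\wt{W}_{\mathbf v}(a,b)$ is a linear functional on $A\cong\AAA^{d}$ ($d=\dim A$), and its coefficients are (the entries of) the matrices obtained by substituting $b$ into the reduced word $R$. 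Unwinding the definition of $R^{Q,W}(\mathbf v)$ — using here that $L\cdot R$ is genuinely a sum of cycles, so that this functional vanishes identically precisely when $R(b)=0$, and that $R$ involves only $B$-coordinates — one obtains
\[ R^{Q,W}(\mathbf v)=A\times Z,\qquad Z:=\{\,b\in B\mid\Tr\wt{W}_{\mathbf v}(a,b)=0\ \text{for all }a\in A\,\}, \]
a closed subvariety of $B$.

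Next I would compute the classes of the fibres by stratifying over $B$. Over $Z$ the functional is identically zero, contributing $A\times Z$ to $M_0^{Q,W}(\mathbf v)$ and nothing to $M_1^{Q,W}(\mathbf v)$. Over $B\setminus Z$ the function restricts, fibrewise over $b$, to a surjective linear form $\AAA^{d}\to\AAA^{1}$, so the projections $M_0^{Q,W}(\mathbf v)\to B\setminus Z$ and $M_1^{Q,W}(\mathbf v)\to B\setminus Z$ have all fibres equal to $\AAA^{d-1}$; both families are piecewise trivial, as one sees by stratifying $B\setminus Z$ by the first non-vanishing coefficient of the form and, on each stratum, solving $\Tr\wt{W}_{\mathbf v}(a,b)=c$ for the corresponding coordinate of $a$. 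Summing scissor relations gives $[M_1^{Q,W}(\mathbf v)]=\LL^{d-1}[B\setminus Z]$ and $[M_0^{Q,W}(\mathbf v)]=\LL^{d}[Z]+\LL^{d-1}[B\setminus Z]$, hence
\[ [M_0^{Q,W}(\mathbf v)]-[M_1^{Q,W}(\mathbf v)]=\LL^{d}[Z]=[A\times Z]=[R^{Q,W}(\mathbf v)]\in K_0(Var_\CC). \]

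To descend to stacks, note that for any variety $Y$ with a $\GL(\mathbf v)$-action (free or not) the atlas $Y\to[Y/\GL(\mathbf v)]$ is a principal $\GL(\mathbf v)$-bundle; since $\GL(\mathbf v)=\prod_{i\in I}\GL_{v_i}$ is a product of general linear groups, hence special, Lemma \ref{princebundle} applies and gives $\big[[Y/\GL(\mathbf v)]\big]=[Y]/[\GL(\mathbf v)]$ in $K_0(St_\CC)$. Applying this with $Y=M_0^{Q,W}(\mathbf v),\,M_1^{Q,W}(\mathbf v),\,R^{Q,W}(\mathbf v)$ and dividing the displayed identity by the invertible class $[\GL(\mathbf v)]$ yields $[\mathfrak M_0^{Q,W}(\mathbf v)]-[\mathfrak M_1^{Q,W}(\mathbf v)]=[\mathfrak R^{Q,W}(\mathbf v)]$.

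The one place where care is genuinely needed is the identification $R^{Q,W}(\mathbf v)=A\times Z$ in the first step: one must verify that the coefficients of the fibrewise linear form $\Tr\wt{W}_{\mathbf v}(-,b)$ are exactly the reduced equations $R(b)$, which is where all the hypotheses on the linear factor (distinctness of the arrows of $L$, at most one per ordered pair of vertices, disjointness from $R$, and $L\cdot R$ being a sum of cycles) enter. After that, the scissor-relation bookkeeping over $B$ and the passage to stacks via the speciality of $\GL(\mathbf v)$ are routine.
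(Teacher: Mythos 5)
Your proof is correct and follows essentially the same route as the paper: split the arrows into those of $L$ and the rest, use the non-degenerate trace pairing to recognize $R^{Q,W}(\mathbf v)$ as the locus where the fibrewise linear form on the $A$-coordinates vanishes identically, and then compute the fibre classes by stratifying over the $B$-coordinates. The only cosmetic difference is that the paper assembles the identity from two separate relations (the $\CC^*$-trivialization furnished by Property A together with the $R=0$ stratification of $M_0^{Q,W}(\mathbf v)$), whereas you read off $[M_0^{Q,W}(\mathbf v)]$ and $[M_1^{Q,W}(\mathbf v)]$ directly from the single stratification $B = Z \sqcup (B\setminus Z)$, making the Property A argument implicit in the piecewise-triviality over $B\setminus Z$.
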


\begin{proof}

The idea is to use the triviality of the fibration and the linearity of the superpotential to obtain two simple relations between the motivic classes.

\paragraph{}Fix a dimension vector $\mathbf{v}\in \ZZ^I_{\geq 0}$. Consider the map $\Tr\tilde{W}_{\mathbf{v}} : M^Q(\mathbf{v})\to \CC $, 
we stratify the base $\CC = \CC^*\sqcup \{0\}$. The fibre over $\{0\}$ we call $M^{Q,W}_0(\mathbf{v})$, the central fibre.
Since the map is a torus family with Property A, then over $\CC^*$ it is a trivial fibration, with general fibre $M_1^{Q,W}(\mathbf{v})$.
This decomposition of $M^Q(\mathbf{v})$ into two pieces gives our first motivic relation,
\begin{eqnarray}\label{1st} [M^Q(\mathbf{v})] = [M_0^{Q,W}(\mathbf{v})] + (\LL -1)[M_1^{Q,W}(\mathbf{v})]. \end{eqnarray}

\paragraph{} Now split the arrows of the quiver into two sets, if and only if they occur in the linear factor of the superpotential;
\[ A:= \{ \alpha :i\to j \mid \alpha \in L \} \textrm{ and } B:= \{\beta :i\to j \mid \beta \not\in L \} \]

Let $m = (M_\alpha)_{\alpha:i\to j}$ be a $Q$ representation of dimension vector $\mathbf{v}$, using the splitting we decompose $m$ 
into parts
\[ (m_A,m_B) \in \bigoplus_{\alpha : i \to j \in A} \Hom(\CC^{v_i},\CC^{v_j}) \times \bigoplus_{\beta : i \to j \in B} \Hom(\CC^{v_i},\CC^{v_j}).  \]
Then
\[ L(m) = L(m_A) \in \bigoplus_{\alpha : i \to j \in A} \Hom(\CC^{v_i},\CC^{v_j}), \]
and since $W=L\cdot R$ is a sum of cycles
\[ R(m) = R(m_B) \in \bigoplus_{\alpha : i \to j \in A} \Hom(\CC^{v_j},\CC^{v_i}). \]
The trace map gives a non-degenerate pairing between these spaces,
\begin{eqnarray*} 
\Tr  &:& \bigoplus_{\alpha : i \to j \in A} \Hom(\CC^{v_i},\CC^{v_j}) \times 
\bigoplus_{\alpha : i \to j \in A} \Hom(\CC^{v_j},\CC^{v_i}) \to \mathbb{C} \\
                           &:& (l,r) \mapsto \Tr (l\cdot r).
\end{eqnarray*}

We use this to compute $M_0^{Q,W}(\mathbf{v})$ in order to get a second relation. 
\paragraph{}
Let $m = (m_A,m_B)\in M_0^{Q,W}(\mathbf{v})$, so that $\Tr (L(m_A)\cdot R(m_B)) = 0$. There are two cases to consider 
firstly when $R(m_B) = 0$ and secondly when $R(m_B) \neq 0$. By definition the locus where $R=0$ is equal to $R^{Q,W}(\mathbf{v})$.
On the compliment of this set consider the projection
\begin{eqnarray*} 
\pi &:& M^{Q}(\mathbf{v})\setminus R^{Q,W}(\mathbf{v}) \to \{m_B \mid R(m_B)\neq 0 \} \\
    &:& (m_A,m_B) \mapsto m_B.
\end{eqnarray*}
This map is a trivial vector bundle. For fixed $m_B$ the condition $\Tr(L(m_A)\cdot R(m_B))=0$ is a single linear condition on the matricies 
$m_A$ in the fibre, and so in the second case the locus of $m\in M^{Q,W}_0(\mathbf{v})$ such that $R\neq 0$, is a vector bundle of 
rank one lower. Both cases considered we have a formula for the class of $M^{Q,W}_0(\mathbf{v})$,
\begin{eqnarray}\label{2nd} [M_0^{Q,W}(\mathbf{v})] = [R^{Q,W}(\mathbf{v})] + ([{M}^{Q}(\mathbf{v})]-[R^{Q,W}(\mathbf{v})])\LL^{-1}. \end{eqnarray}

Finally relations (\ref{1st}),(\ref{2nd}) together imply
\[[M_{0}^{Q,W}(\mathbf{v})] - [M_{1}^{Q,W}(\mathbf{v})] = [R^{Q,W}(\mathbf{v})]\]
Dividing by the automorphism group $\GL(\mathbf{v})$, gives the corresponding result for stacks.  

\end{proof}

 In Section \ref{secstab} we showed that for a choice of framing vector $\mathbf{f}\in \mathbb{Z}^I_{\geq 0 }\setminus \{0\}$ there exists a fine moduli space of quiver representations with Chern-Simons functional,
 \[ \textrm{Tr}\tilde{W}_{\mathbf{v}} : N^Q(\mathbf{v},\mathbf{f})\to \CC. \]
We now assume that the family satisfies Property B (circle compact). Then by Proposition \ref{fibdiff} the virtual motive of $\DT^{Q,W}(\mathbf{v},\mathbf{f})$ is the difference of the general and central fibres of $\Tr \tilde{W}_{\mathbf{v}}$ ,
 \[ [\DT^{Q,W}(\mathbf{v},\mathbf{f})]_{\textrm{vir}} = \mathbb{L}^{-\frac{\dim N^Q(\mathbf{v},\mathbf{f})}{2}} ([\Tr \tilde{W}_{\mathbf{v}}^{-1}(0)]-[\Tr \tilde{W}_{\mathbf{v}}^{-1}(1)]). \]
In Proposition \ref{dr} we ignored stability and expressed the difference of the general and central fibres as equal to the reduced space. Now on adding the stability condition we will get a recursion relation for the virtual motives in terms of the reduced spaces.

\begin{theorem}\label{stab}
Let $Q$ be a finite quiver with superpotential $W$. Suppose that $W$ has a linear factor and torus action with Property B. For any 
dimension vector $\mathbf{v} \in \mathbb{Z}_{\geq 0}^I$ and framing $\mathbf{f} \in \mathbb{Z}_{\geq 0}^I \setminus \{0\}$, 
the virtual motives of the moduli spaces $\DT^{Q,W}(\mathbf{v},\mathbf{f})$ satisfy a recursion,
\[ [\mathfrak{R}^{Q,W}(\mathbf{v})] \mathbb{L}^{\frac{\mathbf{f} \cdot \mathbf{v} }{2} }
= \sum_{\mathbf{w}\leq \mathbf{v}} 
\mathbb{L}^{-\langle \mathbf{v}-\mathbf{w},\mathbf{w}\rangle_{Q}  -1/2\langle \mathbf{w},\mathbf{w} \rangle_{Q} }
\cdot [\DT^{Q,W}(\mathbf{w},\mathbf{f})]_{\textrm{vir}} 
\cdot [\mathfrak{R}^{Q,W}(\mathbf{\mathbf{v} -\mathbf{w}})] \mathbb{L}^{-\frac{ \mathbf{f} \cdot (\mathbf{v} -\mathbf{w}) }{2}} \]
in the ring $\mathcal{M}^{St}_{\mathbb{C}}$. Or equivalently rephrased in the language of Section \ref{secpos}
\[ \R^{Q,W}(\LL^{\frac{\mathbf{f}}{2}}\mathbf{t}) = \DT^{Q,W}_\mathbf{f}(\mathbf{t}) \ast \R^{Q,W}(\LL^{-\frac{\mathbf{f}}{2}}\mathbf{t}) .\]
This uniquely determines the invariants $[\DT^{Q,W}(\mathbf{v},\mathbf{f})]_{\textrm{vir}}$ from 
the classes $[\mathfrak{R}^{Q,W}(\mathbf{v})]$.
\end{theorem}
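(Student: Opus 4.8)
\emph{Overview.} The plan is to run, throughout, with the difference of the general and central fibres of the Chern--Simons functional: this is exactly the quantity that Proposition \ref{dr} identifies (without stability) with the reduced stack, and that Proposition \ref{fibdiff} identifies (with stability) with the virtual motive. The bridge between the unframed and framed pictures is a stratification of the framed representation space by the dimension vector $\mathbf w$ of the subrepresentation generated by the framing vectors; this is the framed analogue of the usual Hall-algebra recursion for quiver moduli and it produces exactly the stated convolution. I divide freely by classes of general linear groups, of unipotent groups and of parabolics, all of which are special.

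\emph{Step 1: the functional on the stable locus.} Because $W=L\cdot R$ has a linear factor there is a canonical $\CC^*$ acting with weight one on $\Tr\tilde W$, namely the one scaling simultaneously all arrows of $L$; it is defined uniformly for every dimension vector, and it preserves the stable locus $U^Q(\mathbf v,\mathbf f)$ since rescaling arrows by a nonzero scalar does not change the span $\CC\langle M_\alpha\rangle\{m_l\}$. Hence the framed functional $\Tr\tilde W_{\mathbf v}\colon N^Q(\mathbf v,\mathbf f)\to\CC$ satisfies Property A for every $\mathbf v$, and with the hypothesised Property B Proposition \ref{fibdiff} gives
\[ [\Tr\tilde W_{\mathbf w}^{-1}(0)]-[\Tr\tilde W_{\mathbf w}^{-1}(1)]=\LL^{\frac12(\mathbf f\cdot\mathbf w-\langle\mathbf w,\mathbf w\rangle_Q)}[\DT^{Q,W}(\mathbf w,\mathbf f)]_{\vir}, \]
using $\dim N^Q(\mathbf w,\mathbf f)=\mathbf f\cdot\mathbf w-\langle\mathbf w,\mathbf w\rangle_Q$, the fibres taken inside $N^Q(\mathbf w,\mathbf f)$.

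\emph{Step 2: the stratification.} For $\mathbf w\le\mathbf v$ let $S_{\mathbf w}\subset M^Q(\mathbf v)\times\prod_i(\CC^{v_i})^{f_i}$ be the locally closed, $\GL(\mathbf v)$-invariant subset where the framing generates a subrepresentation of dimension exactly $\mathbf w$; thus $S_{\mathbf v}=U^Q(\mathbf v,\mathbf f)$ and the $S_{\mathbf w}$ partition the whole space. Fixing the standard $\mathbf w$-dimensional coordinate subspace and its stabilising parabolic $P=(\GL(\mathbf w)\times\GL(\mathbf v-\mathbf w))\ltimes U_P$, the associated-bundle presentation gives $[S_{\mathbf w}/\GL(\mathbf v)]=[S'_{\mathbf w}/P]$, where $S'_{\mathbf w}$ is the sublocus with that fixed subrepresentation. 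Since a representation preserving a subspace is block upper-triangular, $S'_{\mathbf w}$ is the product $U^Q(\mathbf w,\mathbf f)\times M^Q(\mathbf v-\mathbf w)\times\AAA^{d}$, with $d=\sum_{\alpha\colon i\to j}(v_i-w_i)w_j$ the rank of the off-diagonal blocks. The crucial point is that, $W$ being a sum of cycles, the trace of $W$ on a block upper-triangular representation is the sum of the traces on the two diagonal blocks and is independent of the off-diagonal blocks, so on $S'_{\mathbf w}$ the functional equals $\Tr\tilde W(M')+\Tr\tilde W(M'')$. I then use the elementary identity
\[ [\{g+h=0\}]-[\{g+h=1\}]=\bigl([g^{-1}(0)]-[g^{-1}(1)]\bigr)\bigl([h^{-1}(0)]-[h^{-1}(1)]\bigr), \]
valid for any two functions $g,h$ with Property A for the same primitive $\CC^*$ (slice $\{g+h=c\}$ over the value of $g$; every nonzero fibre of $g$, resp.\ $h$, is isomorphic to $g^{-1}(1)$, resp.\ $h^{-1}(1)$). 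Multiplying by $\LL^{d}$, dividing by $[P]=[\GL(\mathbf w)][\GL(\mathbf v-\mathbf w)]\LL^{\mathbf w\cdot(\mathbf v-\mathbf w)}$, and using the bookkeeping identity $d-\mathbf w\cdot(\mathbf v-\mathbf w)=-\langle\mathbf v-\mathbf w,\mathbf w\rangle_Q$, the $\mathbf w$-stratum contributes $\LL^{-\langle\mathbf v-\mathbf w,\mathbf w\rangle_Q}$ times the product of the $\mathbf w$-fibre difference of Step 1 and the $(\mathbf v-\mathbf w)$-fibre difference $[\mathfrak M_0^{Q,W}(\mathbf v-\mathbf w)]-[\mathfrak M_1^{Q,W}(\mathbf v-\mathbf w)]=[\mathfrak R^{Q,W}(\mathbf v-\mathbf w)]$ of Proposition \ref{dr}.

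\emph{Step 3: assembly.} The framed central and general fibre stacks are total spaces of rank-$(\mathbf f\cdot\mathbf v)$ vector bundles over $\mathfrak M_0^{Q,W}(\mathbf v)$ and $\mathfrak M_1^{Q,W}(\mathbf v)$, so their difference is $\LL^{\mathbf f\cdot\mathbf v}[\mathfrak R^{Q,W}(\mathbf v)]$ by Proposition \ref{dr}. Summing the stratum formula of Step 2 over $\mathbf w\le\mathbf v$, plugging in Step 1, and dividing by $\LL^{\mathbf f\cdot\mathbf v/2}$ yields the displayed recursion verbatim; re-reading it against the definitions of $\DT^{Q,W}_{\mathbf f}(\mathbf t)$, $\R^{Q,W}(\mathbf t)$ and the product $\ast$ gives the boxed identity. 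Uniqueness follows by induction on $\mathbf v$: the $\mathbf w=\mathbf v$ summand is an invertible power of $\LL$ times $[\DT^{Q,W}(\mathbf v,\mathbf f)]_{\vir}$, while every other summand involves $[\DT^{Q,W}(\mathbf w,\mathbf f)]_{\vir}$ with $\mathbf w<\mathbf v$. The main obstacle I anticipate is the bookkeeping in Step 2 — carrying the stratification faithfully through $K_0(St_\CC)$ (the reduction to the parabolic quotient, the cancellation $\LL^{d}/[U_P]=\LL^{-\langle\mathbf v-\mathbf w,\mathbf w\rangle_Q}$, and the specialness needed to split off the extension and framing factors), together with checking carefully the block-triangular additivity of $\Tr\tilde W$ that makes the two-function identity applicable.
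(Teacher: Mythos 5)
Your proof is correct and follows essentially the same route as the paper: stratify the framed representation space by the dimension vector $\mathbf{w}$ of the subrepresentation generated by the framing, exploit that the Chern--Simons functional is additive on the diagonal blocks of a block upper-triangular representation, apply Proposition~\ref{dr} to the unframed $(\mathbf v-\mathbf w)$ factor and Property~B to the stable $\mathbf w$ factor, and track the powers of $\LL$. The paper organizes the stratum via a Zariski fibration over the Grassmannian $Gr(\mathbf w,\mathbf v)$ and computes the two fibres by an explicit case analysis; you achieve the same thing via the parabolic quotient $[S'_{\mathbf w}/P]$ and the identity $[\{g+h=0\}]-[\{g+h=1\}]=([g^{-1}(0)]-[g^{-1}(1)])([h^{-1}(0)]-[h^{-1}(1)])$, which is a clean and correct repackaging of that case analysis.
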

\begin{proof}
Fix a dimension vector $\mathbf{v} \in \ZZ^I_{\geq 0}$ and a framing vector $\mathbf{f} \in \ZZ_{\geq 0}^I \setminus \{ 0\}$. First define all the objects without stability
\begin{eqnarray*}
X^Q(\mathbf{v},\mathbf{f}) &:=& M^Q(\mathbf{v}) \times \prod_{i}(\CC^{v_i})^{f_i} \\
Y^{Q,W}(\mathbf{v},\mathbf{f}) &:=& X^Q(\mathbf{v},\mathbf{f}) \cap \Tr \tilde{W}_{\mathbf{v}}^{-1} (0) \\
Z^{Q,W}(\mathbf{v},\mathbf{f}) &:=& X^Q(\mathbf{v},\mathbf{f}) \cap \Tr \tilde{W}_{\mathbf{v}}^{-1} (1) 
\end{eqnarray*}
As shown in Proposition \ref{dr} the two fibres above are related to the reduced space,
\[  [{R}^{Q,W}(\mathbf{v})]\cdot \LL^{\langle \mathbf{f} , \mathbf{v} \rangle }= [Y^{Q,W}(\mathbf{v},\mathbf{f})] - [Z^{Q,W}(\mathbf{v},\mathbf{f}) ]. \]
The stability condition introduced in Section \ref{secstab} depends upon the the span of the vectors $\{m_l\}$ under the matrices $(M_\alpha)_{\alpha : i\to j}$. This vector space was earlier defined as
\[ \CC\langle M_\alpha \rangle \{ m_l \}. \]

For a given dimension vector $\mathbf{w}\in \ZZ^I_{\geq 0}$ set, 
\begin{eqnarray*}
X^Q(\mathbf{v},\mathbf{w},\mathbf{f}) &:=& \{ (M_\alpha , m_l) \mid \underline{\dim} \CC\langle M_\alpha \rangle \{m_l\} = \mathbf{w} \} \subset X^Q(\mathbf{v},\mathbf{f})  \\
Y^{Q,W}(\mathbf{v},\mathbf{w},\mathbf{f}) &:=& X^Q(\mathbf{v},\mathbf{w},\mathbf{f}) \cap \Tr \tilde{W}_{\mathbf{v}}^{-1} (0) \subset Y^{Q,W}(\mathbf{v},\mathbf{f})  \\
Z^{Q,W}(\mathbf{v},\mathbf{w},\mathbf{f}) &:=& X^Q(\mathbf{v},\mathbf{w},\mathbf{f}) \cap \Tr \tilde{W}_{\mathbf{v}}^{-1} (1) \subset Z^{Q,W}(\mathbf{v},\mathbf{f})  .
\end{eqnarray*}
Since the stability condition required the vectors $\{m_l\}$ generate the entire representation, then in the notation of Section \ref{secstab}
\[ U^Q(\mathbf{v},\mathbf{f}) = X^Q(\mathbf{v},\mathbf{v},\mathbf{f}) \textrm{ and } N^Q(\mathbf{v},\mathbf{f}) = X^Q(\mathbf{v},\mathbf{v}, \mathbf{f}) / \GL(\mathbf{v}). \]
So the virtual motive of $\DT^{Q,W} (\mathbf{v},\mathbf{f}) $ is the difference of the general and central fibres 
 \[ [\DT^{Q,W} (\mathbf{v},\mathbf{f})]_{\textrm{vir}} = \LL^{-\frac{\dim N^Q (\mathbf{v},\mathbf{f})}{2}}\left( \frac{[Y^{Q,W}(\mathbf{v},\mathbf{v},\mathbf{f})  ]}{[\GL(\mathbf{v})]} -\frac{[Z^{Q,W}(\mathbf{v},\mathbf{v},\mathbf{f})  ]}{[\GL(\mathbf{v})]} \right) .\]
The dimension
\begin{eqnarray*}
\dim N^Q (\mathbf{v},\mathbf{f}) &=& \dim M^Q(\mathbf{v}) - \dim \GL(\mathbf{v}) + \dim \prod_{i\in I}  (\CC^{v_i})^{f_i}  \\
 &=& - \langle  \mathbf{v},\mathbf{v} \rangle_Q +  \mathbf{v} \cdot \mathbf{f}   .
\end{eqnarray*}
The remaining task is to compute the difference $[Y^{Q,W}(\mathbf{v},\mathbf{v},\mathbf{f})]-[Z^{Q,W}(\mathbf{v},\mathbf{v},\mathbf{f})]$. Let us start with 
$Y^{Q,W}(\mathbf{v},\mathbf{w},\mathbf{f})$ and $Z^{Q,W}(\mathbf{v},\mathbf{w},\mathbf{f})$.
\paragraph{} Let $Gr(\mathbf{w},\mathbf{v})$ be the Grassmannian of $\mathbf{w}$ dimensional subspaces in $\mathbf{v}$ dimensional space, that is 
$Gr(\mathbf{w},\mathbf{v}) = \prod_{i\in I}Gr(w_i,v_i)$. An element of $Y^{Q,W}(\mathbf{v},\mathbf{w},\mathbf{f})$ defines a subspace 
$\CC\langle M_\alpha \rangle \{m_l\}$ with dimension vector $\mathbf{w} \in \ZZ_{\geq 0}^I$, the associated map
\[ Y^{Q,W}(\mathbf{v},\mathbf{w},\mathbf{f}) \to Gr(\mathbf{w},\mathbf{v}) \] is a Zariski fibration. To compute the motivic class of the fibre we fix a basis 
so that 
\[ M_{\alpha}  = \left( \begin{array}{cc} M'_\alpha & M^*_\alpha \\ 0 & M''_\alpha \end{array} \right) \in \left( \begin{array}{cc} \Hom(w_i,w_j) & \Hom(w_i, v_j-w_j) \\ 0 & \Hom (v_i-w_i,v_j-w_j) \end{array} \right)  .\]
where $\alpha : i \to j $, and vectors 
\[ m_l = \left( \begin{array}{c} m_l' \\ 0 \end{array} \right) \in \left( \begin{array}{c} \Hom(1,w_i)  \\ 0 \end{array} \right) \]
where $m_l$ is at vertex $i\in I$. The image of the vectors $m'_l$ under the matrices $M'_\alpha$ is now the entire $\mathbf{w}$ dimensional subspace. The
 Chern-Simons functional also splits with respect to this basis,
\[ \Tr \tilde{W}_{\mathbf{v}} (M_\alpha)  = \Tr \tilde{W}_{\mathbf{w}} (M'_\alpha) + \Tr \tilde{W}_{\mathbf{v}-\mathbf{w}} (M''_\alpha) = 0, \]
in particular there is no restriction on the $M^*_\alpha$, and they factor out an affine space of dimension 
$-\langle \mathbf{v}-\mathbf{w}, \mathbf{w} \rangle_Q +( \mathbf{v}-\mathbf{w})\cdot \mathbf{w}  $. The two cases to consider are
\[ \{ \Tr \tilde{W}_{\mathbf{w}} (M'_\alpha) = \Tr \tilde{W}_{\mathbf{v}-\mathbf{w}} (M''_\alpha) = 0 \}, \]
and
\[ \{ \Tr \tilde{W}_{\mathbf{w}} (M'_\alpha) = -\Tr\tilde{W}_{\mathbf{v}-\mathbf{w}} (M''_\alpha) \neq 0 \}. \]
In the first case we get an element of $Y^{Q,W}(\mathbf{w},\mathbf{w},\mathbf{f})$ and an element of $Y^{Q,W}(\mathbf{v}-\mathbf{w})$. The other stratum is a 
trivial $\CC^*$ bundle, by the nonzero value of the Chern-Simons functional. Looking at the fibre over $1$ gives an element of 
$Z^{Q,W}(\mathbf{w},\mathbf{w},\mathbf{f})$ and an element of $Z^{Q,W}(\mathbf{v}-\mathbf{w})$. The total motivic class of the fibre is then,
\begin{eqnarray*}\lefteqn{ [Y^{Q,W}(\mathbf{w},\mathbf{w},\mathbf{f}) ]\cdot \LL^{-\langle \mathbf{v}-\mathbf{w}  ,\mathbf{w}  \rangle_Q + (\mathbf{v}-\mathbf{w})\cdot  \mathbf{w}} \cdot [Y^{Q,W}(\mathbf{v}-\mathbf{w},\mathbf{f})] \LL^{ -  \mathbf{f} \cdot (\mathbf{v}-\mathbf{w} )} }  \\
&& +(\LL  -1)  [Z^{Q,W}(\mathbf{w},\mathbf{w},\mathbf{f}) ] \cdot \LL^{ -\langle \mathbf{v}-\mathbf{w}  ,\mathbf{w}  \rangle_Q + \langle \mathbf{v}-\mathbf{w},  \mathbf{w} \rangle} \cdot [Z^{Q,W}(\mathbf{v}-\mathbf{w},\mathbf{f})] \LL^{-   \mathbf{f} \cdot (\mathbf{v}-\mathbf{w} ) } .
\end{eqnarray*}
The space $Z^{Q,W}(\mathbf{v},\mathbf{w},\mathbf{f})$ also fibres over the Grassmannian $Gr(\mathbf{w},\mathbf{v})$, the motivic class of the fibre is 
computed similarly, as
\begin{eqnarray*}\lefteqn{ [Y^{Q,W}(\mathbf{w},\mathbf{w},\mathbf{f}) ]\cdot \LL^{-\langle \mathbf{v}-\mathbf{w}  ,\mathbf{w}  \rangle_Q +  (\mathbf{v}-\mathbf{w})\cdot  \mathbf{w} } \cdot [Z^{Q,W}(\mathbf{v}-\mathbf{w},\mathbf{f})] \LL^{ - \mathbf{f} \cdot (\mathbf{v}-\mathbf{w} ) } }  \\
&& +(\LL  -2)  [Z^{Q,W}(\mathbf{w},\mathbf{w},\mathbf{f}) ] \cdot \LL^{ -\langle \mathbf{v}-\mathbf{w}  ,\mathbf{w}  \rangle_Q + (\mathbf{v}-\mathbf{w})\cdot  \mathbf{w} } \cdot [Z^{Q,W}(\mathbf{v}-\mathbf{w},\mathbf{f})] \LL^{- \mathbf{f} \cdot (\mathbf{v}-\mathbf{w} ) } \\
&& +[Z^{Q,W}(\mathbf{w},\mathbf{w},\mathbf{f}) ] \cdot \LL^{ -\langle \mathbf{v}-\mathbf{w}  ,\mathbf{w}  \rangle_Q + (\mathbf{v}-\mathbf{w})\cdot  \mathbf{w} } \cdot [Y^{Q,W}(\mathbf{v}-\mathbf{w},\mathbf{f})] \LL^{- \mathbf{f} \cdot (\mathbf{v}-\mathbf{w} ) }.
\end{eqnarray*}
We are now ready to deduce the recursion. Stratifying $Y^{Q,W}(\mathbf{v},\mathbf{f})$ and $Y^{Q,W}(\mathbf{v},\mathbf{f})$ by the dimension of 
$\CC\langle M_\alpha \rangle \{m_l\}$ we have
\[ Y^{Q,W}(\mathbf{v},\mathbf{f}) = \coprod_{\mathbf{w}\leq \mathbf{v}} Y^{Q,W}(\mathbf{v},\mathbf{w},\mathbf{f}) 
 \textrm{  and  } Z^{Q,W}(\mathbf{v},\mathbf{f}) = \coprod_{\mathbf{w}\leq \mathbf{v}} Z^{Q,W}(\mathbf{v},\mathbf{w},\mathbf{f}). \]
As mentioned the motivic difference of these two spaces was equal to the class of the reduced space (Proposition \ref{dr})
\[ [R^{Q,W} (\mathbf{v}) ] \LL^{ \mathbf{f}\cdot \mathbf{v} } = \sum_{\mathbf{w}\leq \mathbf{v}} [ Y^{Q,W}(\mathbf{v},\mathbf{w},\mathbf{f}) ] - [Z^{Q,W}(\mathbf{v},\mathbf{w},\mathbf{f})] .\]
Substituting in our formulas for $Y^{Q,W}(\mathbf{v},\mathbf{w},\mathbf{f})$ and $Z^{Q,W}(\mathbf{v},\mathbf{w},\mathbf{f})$ above gives
\begin{eqnarray*}
[R^{Q,W} (\mathbf{v}) ] \LL^{ \mathbf{f}\cdot \mathbf{v} } & = & \sum_{\mathbf{w}\leq \mathbf{v}} [Gr(\mathbf{w},\mathbf{v})] 
 \LL^{-\langle \mathbf{v}-\mathbf{w}  ,\mathbf{w}  \rangle_Q + ( \mathbf{v}-\mathbf{w})\cdot \mathbf{w}  -  \mathbf{f} \cdot (\mathbf{v}-\mathbf{w}) } \\
&&\cdot
\left([Y^{Q,W}(\mathbf{w},\mathbf{w},\mathbf{f}) ]  - [Z^{Q,W}(\mathbf{w},\mathbf{w},\mathbf{f}) ]  \right) \\
&  &\cdot  \left( [ Y^{Q,W}(\mathbf{v}-\mathbf{w},\mathbf{f}) ] -[Z^{Q,W}(\mathbf{v}-\mathbf{w},\mathbf{f}) ]  \right) \\
&=&  \sum_{\mathbf{w}\leq \mathbf{v}}  \frac{[\GL(\mathbf{v})]}{[\GL(\mathbf{w})][\GL(\mathbf{v}-\mathbf{w})]} \LL^{-\langle \mathbf{v} - \mathbf{w} ,  \mathbf{w}\rangle_Q} \\
& & \cdot \left([Y^{Q,W}(\mathbf{w},\mathbf{w},\mathbf{f}) ]  - [Z^{Q,W}(\mathbf{w},\mathbf{w},\mathbf{f}) ]  \right) \cdot [R^{Q,W} (\mathbf{v}-\mathbf{w}) ] \\
&=& \sum_{\mathbf{w}\leq \mathbf{v}}  \frac{[\GL(\mathbf{v})]}{[\GL(\mathbf{v}-\mathbf{w})]} \LL^{ -\langle \mathbf{v}-\mathbf{w},\mathbf{w}\rangle_{Q}  -1/2\langle \mathbf{w},\mathbf{w} \rangle_{Q}+ 1/2 \mathbf{f}\cdot \mathbf{w} }\\
& &
\cdot [\DT^{Q,W}(\mathbf{w},\mathbf{f})]_{\textrm{vir}} 
\cdot [{R}^{Q,W}(\mathbf{\mathbf{v} -\mathbf{w}})]
\end{eqnarray*}
Finally dividing out by the automorphism groups $\GL(\mathbf{v}) \textrm{ and }\GL(\mathbf{v}-\mathbf{w})$ gives,
\[ [\mathfrak{R}^{Q,W}(\mathbf{v})] \mathbb{L}^{\frac{ \mathbf{f} \cdot \mathbf{v} }{2} }
= \sum_{\mathbf{w}\leq \mathbf{v}} 
\mathbb{L}^{-\langle \mathbf{v}-\mathbf{w},\mathbf{w}\rangle_{Q}  -1/2\langle \mathbf{w},\mathbf{w} \rangle_{Q} }
\cdot [\DT^{Q,W}(\mathbf{w},\mathbf{f})]_{\textrm{vir}} 
\cdot [\mathfrak{R}^{Q,W}(\mathbf{\mathbf{v} -\mathbf{w}})] \mathbb{L}^{-\frac{ \mathbf{f} \cdot (\mathbf{v} -\mathbf{w}) }{2}}. \]
\end{proof}

\section{A Geometric Application.}\label{}

We outline an application of dimensional reduction, to computing motivic DT invariants of orbifolds.
\paragraph{}
Let $\ZZ_n$ be the finite group generated by $\zeta = e^{2\pi i / n}$. This acts on $\CC^3$ sending $(x,y,z) \mapsto (x,\zeta y, \zeta ^{-1}z)$. 
The quotient can be considered as a smooth Deligne-Mumford stack (orbifold),
\[\mathcal{X} = [\CC^3 / \ZZ_n] . \]
Irreducible representations of $\ZZ_n$ are classified by an integer $\{0,1,\ldots,n-1\}$ and so it follows that $\ZZ_{\geq 0}^n$ 
indexes all representations of $\ZZ_n$. The Hilbert scheme of degree zero substacks of $\mathcal{X}$ with class 
$\mathbf{v} \in \ZZ_{\geq 0}^n$ is defined concretely by,
\[ \Hilb^{\mathbf{v}}(\mathcal{X}) = \{ Z\subset \CC^3 \mid H^0(\mathcal{O}_Z) \iso \mathbf{v} \textrm{ as a } \ZZ_n \textrm{ rep.}\}.\]
This is a $\ZZ_n$ invariant component of the Hilbert scheme on $\CC^3$. There consequently is a similar quiver description for the above 
Hilbert schemes, where the $\ZZ_n$ invariance relates the quiver to an affine Dynkin diagram of type $A_n$.
\paragraph{}
Applying the dimensional reduction argument, the reduced spaces become certain commuting varieties whose motives we calculate. 
In summary if we let,
\[\DT_\mathcal{X}(\mathbf{t}) = \sum_{\mathbf{v}\in \ZZ_{\geq 0}^{n}} [\Hilb^\mathbf{v}(\mathcal{X})]_{\vir} 
t_0^{v_0}\cdots t_{n-1}^{v_{n-1}}, \]
then it can be shown \cite{a_n},

\begin{eqnarray*}
 \DT_\mathcal{X}(\mathbf{t}) & = &  \prod_{m\geq 1}\prod_{k=1}^{m} 
\frac{1}{1-\mathbb{L}^{2-k+\frac{m}{2}}t^m}
\frac{1}{(1-\mathbb{L}^{1-k+\frac{m}{2}}t^m)^{n-1}}  \\
&  & \cdot \prod_{m\geq 1} \prod_{k=1}^{m} \prod_{0< a \leq b< n} 
\frac{ 1 }{ 1-\mathbb{L}^{1-k+\frac{m}{2}} t^m t_{[a,b]} }
\frac{ 1 }{ 1-\mathbb{L}^{1-k+\frac{m}{2}} t^m t_{[a,b]}^{-1} }.
\end{eqnarray*}
Where $ t = t_0 \cdot t_1 \cdots t_{n-1} $, $ t_{[a,b]} = t_a \cdot t_{a+1} \cdots t_{b} $ and the above product is taken 
in the standard ring of commuting indeterminates. 
This provides a motivic refinement of Ben Young's results on enumerating three dimensional colored partitions \cite{young}.

\section{Acknowledgements.}\label{}

Many thanks to my supervisor Jim Bryan. I am partially supported by a Four Year Doctoral Fellowship (4YF), University of British Columbia.

\end{document}